%%%%	(Hopefully) final version as of 6/23/19
%%%%

\documentclass[11pt]{amsart}
\usepackage{amsmath}
  \usepackage{manfnt}
\usepackage{tikz}
\usepackage{tikz-cd}
\usepackage{amsfonts}
\usepackage{amsthm}
\usepackage{amssymb}
\usepackage{mathrsfs}
\usepackage{a4wide}
\usepackage{hyperref}
\usepackage{stmaryrd}

\def\Vg{V\kern-0.15em{g}}
\def\DD{\mathbf{D}}
\def\univ{\mathrm{univ}}

\def\Sw{\widetilde{S}}

\def\tr{\mathrm{tr}}
\def\qq{l}
\def\Pbar{\overline{P}}
\def\kbar{\overline{k}}

\def\m{\mathfrak{m}}
\def\T{\mathbf{T}}
\def\Z{\mathbf{Z}}
\def\OL{\mathcal{O}}
\def\Q{\mathbf{Q}}

\def\C{\mathbf{C}}
\def\GL{\mathrm{GL}}
\def\Frob{\mathrm{Frob}}
\def\eps{\epsilon}
\def\Gal{\mathrm{Gal}}
\def\Qbar{\overline{\Q}}
\def\rhobar{\overline{\rho}}

\DeclareMathOperator\Spec{Spec}
\DeclareMathOperator\End{End}

\newtheorem{theorem}{Theorem}[section]
\newtheorem{df}[theorem]{Definition}
\newtheorem{facts}[theorem]{Fact}
\newtheorem{lemma}[theorem]{Lemma}
\newtheorem{prop}[theorem]{Proposition}
\newtheorem{remark}[theorem]{Remark}

\def\wT{\widetilde{\T}}
\def\wP{\widetilde{P}}
\def\Ddag{\widetilde{D}^{\dagger}}
\def\Rd{R^{\dagger}}
\def\Rtd{\Rtw^{\dagger}}
\def\Rtw{\widetilde{R}}

\def\Tbar{\overline{T}}
\def\Dbar{\overline{D}}
\def\Pbar{\overline{P}}
\def\alphabar{\overline{\alpha}}
\def\betabar{\overline{\beta}}

\renewcommand{\ell}{l}

\thanks{F.C. \ was supported in part by NSF Grant  DMS-1701703.}

\begin{document}

\title{Pseudo-representations of Weight One are unramified}
\author{Frank Calegari and Joel Specter}
\begin{abstract}
We prove that the determinant (pseudo-representation) associated to
the Hecke algebra of Katz modular forms of weight one and level prime to~$p$
is unramified at~$p$.
\end{abstract}
\maketitle
{\footnotesize
\tableofcontents
}

\section{Introduction}
Let~$p$ be prime, and let~$N \ge 5$ be prime to~$p$. Let~$\OL$ be the ring of integers in a finite extension~$K$ of~$\Q_p$ with uniformizer~$\varpi$.
Let~$X_1(N)$  be the modular curve considered as a smooth proper curve over~$\Spec(\OL)$, and let~$\omega$ be the pushforward
of the relative dualizing sheaf along the universal elliptic curve. 
The coherent cohomology group~$H^0(X_1(N),\omega)$ may be identified with
the space of modular forms of weight one with coefficients in~$\OL$.
For general~$m$, one knows that the map:
$$H^0(X_1(N),\omega) \rightarrow H^0(X_1(N),\omega/\varpi^m)$$
need not be surjective. This was first observed by Mestre for~$N = 1429$ and~$p = 2$, (see~\cite[Appendix~A]{Bas}), and many examples for larger~$p$ have
been subsequently computed by Buzzard and Schaeffer~\cite{SmallBuzz,Schaeffer}.
In particular, if~$\T$ denotes the subring of
$$\End_{\OL} \lim_{\rightarrow} H^0(X_1(N),\omega/\varpi^m) = \End_{\OL}  H^0(X_1(N), \omega \otimes K/\OL),$$
generated by Hecke operators~$T_{\ell}$ and~$\langle \ell \rangle$ for~$(\ell,N) = 1$, then~$\T$ may be bigger than the classical Hecke algebra acting 
on the space~$H^0(X_1(N),\omega \otimes \C)$ of classical modular forms of weight one.
Let ~$G_{\Q}$ be the absolute Galois group of ~$\Q$. 
Let~$G_{\Q,N}$ be the absolute Galois group of the maximal extension of~$\Q$
unramified outside~$N \infty$. Our main theorem is as follows:

\begin{theorem} \label{theorem:main}
Let~$\T \subset \End_{\OL} H^0(X_1(N), \omega \otimes K/\OL)$ denote the algebra
generated by Hecke operators~$T_{\ell}$ and~$\langle \ell \rangle$ for all~$\ell$ prime to~$N$. There is a degree~$d = 2$ 
determinant\footnote{a notion of pseudo-representation
which works in all characteristics, see~\S\ref{section:pseudo}.}:
$$\DD: \T[G_{\Q}] \rightarrow \T, \qquad P(\DD,\sigma) = X^2 - T(\sigma) X + D(\sigma),$$
which is unramified outside ~$N\infty$ --- equivalently, which factors through~~$\T[G_{\Q,N}]$  --- such that for all primes ~$\ell \nmid N$, including~$\ell = p$, one has~
 ~$$T(\Frob_{\ell}) = T_{\ell} \ \text{and} \ D(\Frob_{\ell}) = \langle \ell \rangle.$$
 \end{theorem}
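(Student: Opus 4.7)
My plan is to first construct, by relatively standard means, a continuous pseudo-representation of $G_{\Q,Np}$ valued in $\T$ with the correct traces and determinants at Frobenii outside $Np$, and then (the hard part) prove that this pseudo-representation in fact extends to $G_{\Q,N}$ with $T(\Frob_p)=T_p$ and $D(\Frob_p)=\langle p\rangle$ for the weight-one Hecke operators $T_p, \langle p \rangle \in \T$ themselves.

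For the initial construction, multiplication by the Hasse invariant $A \in H^0(X_1(N),\omega^{p-1})$ embeds $H^0(X_1(N),\omega/\varpi^m)$ into $H^0(X_1(N),\omega^p/\varpi^m)$ in a manner equivariant for $T_\ell$ and $\langle\ell\rangle$ at all primes $\ell \nmid Np$. The weight-$p$ Hecke algebra $\wT$ carries a pseudo-representation of $G_{\Q,Np}$ coming from \'etale cohomology \`a la Eichler--Shimura--Deligne; pulling this back along the surjection $\wT \twoheadrightarrow \T'$, where $\T' \subset \T$ is the subalgebra generated by Hecke operators at primes $\ell \nmid Np$, and extending scalars, one obtains an initial $\DD$ on $\T[G_{\Q,Np}]$ with the correct values at Frobenii outside $Np$.

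The substance of the theorem is the unramifiedness at $p$. At each classical weight-one eigenform $f$ over $\Qbar$, the Deligne--Serre theorem provides a Galois representation $\rho_f$ that is already unramified at $p$, with $\tr \rho_f(\Frob_p) = a_p(f)$ and $\det \rho_f(\Frob_p) = \chi_f(p)$, because both Hodge--Tate weights vanish in weight one. If classical forms were Zariski-dense in $\Spec(\T)$ we would be done immediately, but the whole point of the theorem is that $\T$ may contain mod-$\varpi^m$ components without characteristic-zero lifts; unramifiedness must be proved on those as well. My strategy would be to exploit the \emph{doubling} phenomenon at $p$: weight-one forms of level $N$ admit two natural lifts to weight $p$ (via multiplication by the Hasse invariant and via Verschiebung), whose two $U_p$-eigenvalues $\alpha, \beta$ on the level-$Np$ side satisfy $\alpha + \beta = T_p$ and $\alpha\beta = \langle p \rangle$. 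Requiring compatibility of $\DD$ simultaneously with both lifts should force the characteristic polynomial of $\Frob_p$ to equal $X^2 - T_p X + \langle p \rangle$ and, more delicately, force the pseudo-representation to be trivial on inertia at $p$ throughout all of $\T$.

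The principal obstacle is that the two lifts become congruent modulo $p$ precisely when $\alphabar = \betabar$, which occurs for example in the supersingular locus or when $\rhobar|_{G_{\Q_p}}$ is scalar; the naive doubling argument then degenerates. Overcoming this will likely require a more sophisticated construction --- perhaps identifying $\T_\m$ with the universal unramified deformation ring $\Ru$ of $\rhobar$ in the style of Calegari--Geraghty, or a direct integral argument comparing the spaces $H^0(X_1(N),\omega^k/\varpi^m)$ as $k$ varies within a fixed congruence class modulo $p-1$ so as to separate the two lifts at the level of Hecke modules. I expect the bulk of the technical work of the paper to lie in this last step.
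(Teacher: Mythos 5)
You have the right scaffolding --- multiplying by the Hasse invariant to get the determinant on $G_{\Q,Np}$ from higher weight, and the doubling phenomenon at $p$ (the two stabilizations with $U_p$-eigenvalues summing to $T_p$ and multiplying to $\langle p\rangle$) as the engine for unramifiedness --- and these are indeed the inputs the paper uses. But the central mechanism is missing: you never say \emph{how} "compatibility with both lifts" forces inertia at $p$ into the kernel of a determinant valued in an arbitrary (possibly non-reduced, residually reducible, $p=2$) quotient of the Hecke algebra; the phrase "should force" is exactly where the paper's new content lives. The paper's solution is to define a notion of \emph{ordinary determinant of weight $n$ with eigenvalue $\alpha$}, whose key clause is the global condition $(h-\psi(h))(\phi-\alpha)\in\ker(P)$ for all $h\in I_p$, realized on a semi-local ring $\Rtd_n$ obtained from the universal deformation ring by adjoining a root $\alpha$ of $X^2-T(\phi)X+D(\phi)$. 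The crucial step is then Proposition~\ref{prop:key}: for any surjection $\Rtd_n\twoheadrightarrow\Sw$ with $S$ the image of $\Rd_n$, if $\Sw/S$ has trivial annihilator then the relation $\alpha\bigl(1-\psi(h)\bigr)=0$ in $\Sw/S$ forces $\psi(h)=1$ and then $T(sh)=T(s)$ for all $s$, i.e.\ unramifiedness. Feeding in $\Sw=\T_{\m}[U_p]/(U_p^2-T_pU_p+\langle p\rangle)$, which is free of rank two over $\T_{\m}$ by the doubling lemma (\cite[Lemma~3.16]{CG}, valid as written for $p=2$), together with the surjection $\Rtd_n\to\wT_{n,\m}$ sending $\alpha\mapsto U_p$ (Lemma~\ref{lemma:higher}), gives the theorem, including $T(\phi)=T_p$ and $D(\phi)=\langle p\rangle$ since $U_p$ satisfies both quadratics over the rank-two free module. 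Nothing in this degenerates when $\alphabar=\betabar$; the eigenvalue $\alpha$ is carried as auxiliary data precisely so that the non-$p$-distinguished case causes no trouble.

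Your proposed fallbacks for the degenerate case do not repair the gap. Identifying $\T_{\m}$ with an unramified deformation ring "in the style of Calegari--Geraghty" is the route of \cite[Theorem~3.11]{CG}, which requires a genuine Galois representation over $\T_{\m}$ (hence residual irreducibility, via Carayol) and $p>2$; but the residually reducible and $p=2$ cases are exactly the point of the theorem as stated, so this cannot serve as the general argument. The suggestion of "a direct integral argument comparing $H^0(X_1(N),\omega^k/\varpi^m)$ as $k$ varies" is not an argument as it stands. So the proposal correctly locates the difficulty but does not supply the idea --- the ordinarity condition on determinants with a chosen Frobenius eigenvalue and the annihilator criterion of Proposition~\ref{prop:key} --- that resolves it.
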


\medskip

The ring~$\T$ is a finite~$\OL$-algebra and  is moreover a semi-local ring, and thus is a direct sum~$\bigoplus \T_{\m}$ of its completions at maximal ideals~$\m$. 
For each maximal ideal~$\m$ of~$\T$, 
the residual determinant~$\Pbar: \OL[G_{\Q}] \rightarrow \T_{\m}/\m = k$
arises from to a semi-simple Galois representation~$\rhobar$ over~$\kbar$ (Theorem~A of~\cite{Chenevier}). If this representation
is irreducible, then~$P$ itself also arises from a genuine representation, which, by
a theorem of Carayol~\cite{carayol1994formes}, takes values in~$\T_{\m}$. It follows from Theorem~\ref{theorem:main}
that the corresponding representation
$$\rho: G_{\Q} \rightarrow  \GL_2(\T_{\m})$$
is unramified at~$p$.
For~$p > 2$, this is a consequence of Theorem~3.11 of~\cite{CG}. Hence the main
interest of  this result is to residually reducible representations.  However,
the result is new even for absolutely irreducible representations when~$p = 2$
 (although there are significant partial results by Wiese~\cite{Wiese}).
Although the proof of Theorem~\ref{theorem:main} is similar
to that of Theorem~3.11 of~\cite{CG}, it is more direct, and does not rely on any explicit analysis of the ordinary deformation rings of Snowden~\cite{Snowden}.
Hence this paper can  also be seen as providing a simplification of the proof of Theorem~3.11 of \emph{ibid}.
(See also Remark~\ref{rem:original}).

\medskip

The existence of the determinant without any condition at~$p$ is an easy consequence of the corresponding result in higher weight: first consider the action of~$\T$ on~$H^0(X_1(N), \omega/\varpi^m)$
and then multiply by  a suitable power of the Hasse invariant which is Hecke equivariant. Hence the main content of this theorem is that the determinant is unramified at~$p$.

\section{Determinants}

\label{section:pseudo}

In this paper, we will use  the term ``pseudo-representation'' as a catch-all to refer to various types of generalized representations. The first pseudo-representations were introduced by Wiles~\cite{WilesOrdinary}  for 2-dimensional representations;  these were later generalized to any dimension by Taylor~\cite{MR1115109}. Following Roquier~\cite{MR1378546}, we will call Taylor-style pseudo-representations
 ``pseudo-characters,'' because of their resemblance to the trace of a representation. In this paper, we will mainly consider the pseudo-representations of Chenevier~\cite{Chenevier} called ``determinants.'' These are more general and flexible than pseudo-characters, and in particular allow us to treat the case where~$p = d = 2.$ We shall only be concerned with determinants of degree~$d = 2$.

We begin by recalling the notion of a determinant ~\cite[pg.~223]{Chenevier}. 
Let ~$G$ be a group and ~$A$ be a ring. Let ~$d$ be a positive integer. If ~$M$ is a free, rank-$d$
$A$-module equipped with a linear ~$G$ action, then one may consider the 
family of characteristic polynomials associated to the elements of ~$A[G]$ acting on ~$M.$
This family of polynomials is highly interdependent, and is a robust invariant of the representation ~$M$.
Informally, a degree ~$d$ determinant is a pseudo-representation containing the information of a family of polynomials which satisfies the collection of common relations shared by all families of degree ~$d$ characteristic polynomials. If~$B$ is an~$A$-algebra, one can extend the action of~$A[G]$ on~$M$ to an action of ~$B[G]$ on ~${M\otimes_A B},$ and also obtain
corresponding characteristic polynomials over~$B$ for elements in~$B[G]$. Chenevier's definition of a determinant follows from the following two insights. First, the data of the characteristic polynomials for elements in ~$B[G]$ as one ranges over all ~$A$-algebras ~$B$ is equivalent to that of the literal determinants of the elements of ~$B[G]$ acting on  ~${M\otimes_A B}$ as one ranges over all ~$A$-algebras ~$B$: the characteristic polynomial of an element ~$m \in B[G]$ is, by definition, the determinant of the endomorphism ~$X-m$ acting on ~${M \otimes_{A} B[X].}$ Second, relations in families of characteristic polynomials arise via compatibilities of the determinant map. The literal determinants of the elements of~$B[G]$ acting on ~$M \otimes_A B$ can
be organized as a series of set theoretic maps ~$\det:B[G] \rightarrow B,$
one for each ~$A$-algebra ~$B,$ which satisfy the following
compatibilities:

\begin{enumerate}
\item the maps ~$\det$ are natural in ~$B,$
\item ~$\det(1) = 1$ and the element ~$\det(xy) = \det(x)\det(y)$ for all ~$x,y
\in B[G],$
\item and ~$\det(bx) = b^d \det(x),$ where ~$b\in B$ and ~$d$ is equal to the
rank of ~$M.$
\end{enumerate}

\noindent A \textit{determinant} is simply a family of maps which are
compatible in these three ways.

\begin{df} Let ~$A$ be a ring\footnote{All rings considered in this note
will carry a Hausdorff topology, and, with the exception of group rings,
will be commutative. Our terminology will suppress these topological and
algebraic considerations. We use the terms \textit{module} and
\textit{algebra} to denote a Hausdorff topological module and a
commutative, Hausdorff topological algebra, respectively. }, ~$G$ be a
topological group, and
$d$ be a positive integer. A degree ~$d$ determinant is a continuous
$A$-valued polynomial law\footnote{An ~$A$-valued polynomial law between
two ~$A$-modules ~$M$ and
$N$ is by definition a natural transformation ~$N\otimes_{A}B \rightarrow
M
\otimes_A B$ on the category of commutative ~$A$-algebras ~$B.$ A
polynomial
law is called multiplicative if ~${\DD}(1) = 1$ and ~${\DD}(xy) = {\DD}(x){\DD}(y)$ for
all
$x,y \in A[G]\otimes B,$ and is called homogeneous of degree ~$d,$ if
${\DD}(xb) = b^d{\DD}(x)$ for all ~$x \in A[G]\otimes B$ and ~$b \in B.$ A
polynomial law is called continuous if its characteristic polynomial map
on ~$G$ given by ~$g \mapsto P({\DD},g)$ is continuous.} ~${\DD}: A[G] \rightarrow
A,$ which is multiplicative
and homogeneous of degree ~$d.$ If ~$B$ is an ~$A$-algebra and ~$m \in
B[G],$ we call ~$P({{\DD}},m)(X)
:= {\DD}(X-m)\in B[X]$ the characteristic polynomial of ~$m.$
\end{df}

Given a determinant~${\DD}:A[G] \rightarrow A$ and an~$A$-algebra~$B,$ the restriction of~${\DD}$ to the category of~$B$-algebras defines a determinant~${\DD}_B:B[G] \rightarrow B$ on~$B.$ We call~${\DD}_B$ the base change of~${\DD}$ to~$B.$

\subsection{Determinants of degree~\texorpdfstring{$d = 2$}{d=2}}
Given a determinant~${\DD}: A[G] \rightarrow A$ of degree~$2$, the corresponding
characteristic polynomials~$P(\DD,m) \in B[X]$ for~$m \in B[G]$  have degree~$2$ and can be written
in the form
$$P(m) = P(\DD,m) = X^2 - T(m) X + D(m),$$
for maps~$T, D: B[G] \rightarrow B$. Note that the family of maps ~$D:B[G] \rightarrow B$ as~$B$ ranges over all~$A$-algebras is precisely
the data which defines the polynomial law~$\DD$.
In practice, our groups~$G$ will always be Galois groups with the usual pro-finite topology,
and our rings~$A$  will either be ~$p$-adically complete semi-local~$W(k)$-algebras with the~$p$-adic topology
or~$p$-adic fields with the~$p$-adic topology. We insist that all Galois representations
and all determinants considered in this paper are continuous with respect to the topologies on~$G$ and~$A$.

In residue characteristic different from~$2$ and degree~$2$, one can recover~$D$ from~$T$ via the identity
$$D(\sigma) = \frac{T(\sigma)^2 - T(\sigma^2)}{2}.$$
On the other hand, for any~$p$, one can recover~$T$ from~$D$ by the formula
$$T(\sigma) = D(\sigma + 1) - D(\sigma) - 1.$$
We have the following characterization of determinants of degree~$2$.

\begin{lemma}{\cite[Lemma~7.7]{Chenevier}} The set of determinants of ~$G$ over~$A$ of degree~$2$
are in bijection with maps~$(T,D)$ from~$G$ to~$A$ satisfying the following two conditions:
\begin{enumerate}
\item ~$D: G \rightarrow A^{\times}$ is a homomorphism,
\item ~$T: G \rightarrow A$ is a function with~$T(1) = 2$, and such that,
for all~$g$, ~$h \in G$:
\begin{enumerate}
\item ~$T(g h) = T(h g)$,
\item ~$D(g) T(g^{-1} h) - T(g) T(h)  + T(g h) = 0$.
\end{enumerate}
\end{enumerate}
\end{lemma}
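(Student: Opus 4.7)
The plan is to verify the bijection in both directions. The forward direction, extracting $(T,D)$ from a determinant $\DD$, is essentially a translation exercise using the definitions; the reverse direction, constructing $\DD$ from $(T,D)$, carries the real content.

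\emph{Forward direction.} Given a degree-$2$ determinant $\DD$, define $D(g) = \DD(g)$ and $T(g) = \DD(1+g) - \DD(g) - 1$, so that $P(\DD,g)(X) = X^2 - T(g)X + D(g)$. Multiplicativity of $\DD$ immediately yields $D(gh) = D(g)D(h)$, and $D(g)D(g^{-1}) = \DD(1) = 1$ forces $D\colon G \to A^\times$, giving condition (1). For $T(1) = 2$, base-change to $A[X]$ and use homogeneity: $\DD_{A[X]}((X-1)\cdot 1_G) = (X-1)^2$, so $P(\DD,1) = (X-1)^2$. For conjugation invariance of $T$, compute
\[
\DD(1+gh) = \DD\bigl(g(g^{-1}+h)\bigr) = \DD(g)\DD(g^{-1}+h) = \DD\bigl((g^{-1}+h)g\bigr) = \DD(1+hg),
\]
using multiplicativity and commutativity of $A$; since also $\DD(gh) = \DD(hg)$, the formula $T = \DD(1+\cdot) - \DD - 1$ yields $T(gh) = T(hg)$. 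Finally, identity (b) is the trace of the Cayley--Hamilton relation $g^2 - T(g)g + D(g) = 0$ for $\DD$ (which holds for polynomial laws of degree $2$), multiplied on the right by $g^{-1}h$.

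\emph{Reverse direction.} Given $(T,D)$ satisfying (1) and (2), extend $T$ to an $A$-linear map $A[G] \to A$. Define $\DD$ on pure elements by $\DD(bg) = b^2 D(g)$ and, by polarization, on two-term sums by
\[
\DD(bg + ch) = b^2 D(g) + bc\bigl(T(g)T(h) - T(gh)\bigr) + c^2 D(h),
\]
which matches the universal degree-$2$ polynomial $\det(bA + cB)$ on $2 \times 2$ matrices. Extending by iterated polarization and base change defines $\DD_B$ on all of $B[G]$; homogeneity is built in and naturality is automatic, so the remaining content is multiplicativity. When unwound into polynomial identities in the values of $T$ and $D$, multiplicativity reduces precisely to relation (b).

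The main obstacle is the reverse direction in residue characteristic $2$. When $2$ is invertible, one can recover $D$ from $T$ via $D(g) = (T(g)^2 - T(g^2))/2$ and fall back on the Taylor--Rouquier theory of pseudo-characters; for $p = 2$, however, this is unavailable and $D$ must be kept as independent data. Relation (b) is what bridges $T$ and $D$ and enables a direct, characteristic-free construction of the polynomial law, at the cost of some combinatorial bookkeeping to verify multiplicativity in full generality.
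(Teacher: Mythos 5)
The paper itself offers no proof of this lemma---it is cited verbatim to Chenevier~\cite{Chenevier}, Lemma~7.7---so there is no in-paper argument to compare against. Your sketch is a reasonable outline of what a proof must accomplish, and the forward direction is essentially correct as written: the formula $T(g) = \DD(1+g) - \DD(g) - 1$ follows from homogeneity by evaluating $\DD(X - g)$ at $X = -1$; multiplicativity and $\DD(1) = 1$ give condition (1); the conjugation-invariance calculation for $T$ is clean; and condition (b) does indeed follow from $T\bigl((g^2 - T(g)g + D(g))\,g^{-1}h\bigr) = 0$, which is an instance of Cayley--Hamilton for determinants. The one caveat is that Cayley--Hamilton for polynomial laws is itself a nontrivial ingredient, and for $d=2$ it is essentially equivalent to the identity you are trying to prove; one should be careful not to make the argument circular, or else prove the degree-$2$ Cayley--Hamilton identity directly from the quadratic-form polarization~\eqref{eq1}.

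The genuine gap is in the reverse direction, and you flag it yourself. Defining $\DD$ on $B[G]$ by the quadratic-form formula
\[
\DD\Bigl(\sum b_i g_i\Bigr) = \sum_i b_i^2\, D(g_i) + \sum_{i<j} b_i b_j\,\bigl(T(g_i)T(g_j) - T(g_i g_j)\bigr)
\]
is fine---it is clearly a polynomial law, homogeneous of degree $2$, and natural in $B$---but asserting that multiplicativity ``reduces precisely to relation (b)'' is the entire content of the lemma, and you have not carried out the verification. Even the first nontrivial case, $\DD\bigl(g(h_1+h_2)\bigr) = D(g)\,\DD(h_1 + h_2)$, requires applying (b) twice and using $D(gh_1) = D(g)D(h_1)$ to show $T(gh_1)T(gh_2) - T(gh_1 g h_2) = D(g)\bigl(T(h_1)T(h_2) - T(h_1 h_2)\bigr)$; the general case $\DD(xy) = \DD(x)\DD(y)$ for arbitrary $x, y \in B[G]$ requires expanding both sides as degree-$4$ polynomials in the coefficients and matching them term by term, and this calculation is where the lemma lives. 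As written, the proof proposal establishes a bijection with the unproved half left as an assertion. Since the paper simply defers to Chenevier, that is arguably the right call in a write-up, but as a standalone proof it is incomplete.
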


In light of this lemma, we shall (from now on) regard a determinant~$\DD$ of~$G$
over~$A$ of degree~$2$ as precisely given by a pair of functions~$(T,D)$
satisfying the equations above. 
Given ~$g \in G$, we have a corresponding characteristic
polynomial~$P(g) = X^2 -T(g) X + D(g)$. By abuse of notation,
we shall denote the pair~$(T,D)$ by~$P = (T,D)$.
By~\cite[Lemma~7.7]{Chenevier}, the functions~$T$ and~$D$ extend
to functions from~$A[G]$ to~$A$. In the case of~$T$, this extension is the linear
extension, and in the case of~$D$, it can be constructed explicitly by using
the equation for~$D(xt + ys)$ given below.
Note that~$D$ as a function of~$A[G]$ determines~$T$ and hence~$P$ and hence~$\DD$,
but~$D$ as a function of~$G$ (in general) does not. Under this equivalence, the base change of a determinant ~$P := (T,D)$ to an~$A$-algebra~$B$ corresponds to the determinant~$f\circ P :=(f\circ T,f \circ D)$ obtained by post-composing the functions~$T$ and~$D$ with the structure homomorphism~$f:A \rightarrow B.$

 If ~$A$ is an algebraically closed field, then
$(T,D)$ may be realized as the trace and (classical) determinant of an actual semisimple 
representation (Theorem~A of~\cite{Chenevier}).

There is a well-defined notion of the kernel of ~$P$ (see~\cite[~\S1.4]{Chenevier}), which in our case has
the following simple description:

\begin{lemma} \label{lemma:kernel} The kernel of a determinant~$P = (T,D)$ of degree~$2$ consist of
the elements~$x \in A[G]$ satisfying the following two conditions:
\begin{enumerate}
\item ~$T(x y) = 0$ for all~$y \in A[G]$,
\item ~$D(x) = 0$.
\end{enumerate}
\end{lemma}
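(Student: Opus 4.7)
The plan is to unpack Chenevier's abstract definition of the kernel from \cite[§1.4]{Chenevier} by exploiting the polarization identity for the homogeneous degree-$2$ polynomial law $D$, namely
$$D(y + x) = D(y) + D(x) + T(y) T(x) - T(yx), \qquad x, y \in B[G],$$
valid for every $A$-algebra $B$. This is the direct analog of the $2 \times 2$ matrix identity $\det(y+x) = \det(y) + \det(x) + \tr(y)\tr(x) - \tr(yx)$; it is the off-diagonal polarization of a quadratic polynomial law, and can be deduced either from the axioms for $(T,D)$ recalled in the preceding lemma or from the extension formula for $D$ on $A[G]$ referenced in the excerpt.

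By definition, $x \in \ker P$ if and only if $D_B(y + x_B) = D_B(y)$ for every $A$-algebra $B$ and every $y \in B[G]$, where $x_B$ denotes the image of $x$ in $B[G]$. Since $D_B(x_B) = D(x)$ and $T_B(x_B) = T(x)$, the polarization identity transforms this condition into
$$D(x) + T_B(y)\, T(x) \;=\; T_B(y\, x_B), \qquad \text{for all } y \in B[G] \text{ and all } A\text{-algebras } B. \qquad (\ast)$$

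For the forward implication, taking $B = A$ with $y = 0$ in $(\ast)$ forces $D(x) = 0$, and then taking $y = 1$ together with $T(1) = 2$ forces $T(x) = 0$; with both vanishing, $(\ast)$ collapses to $T(yx) = 0$ for all $y \in A[G]$, which is condition (1). For the converse, if conditions (1) and (2) hold, then in particular $T(x) = T(1\cdot x) = 0$, so $(\ast)$ reduces to the vanishing of $T_B(y\, x_B)$ for all $y \in B[G]$ --- which follows from condition (1) together with the $B$-linearity of $T_B$, since $T(\,\cdot\,x)$ already vanishes on the $A$-module generators $G \subset A[G]$. The only non-formal ingredient is the polarization formula itself; granted that identity, the equivalence is a short computation in each direction, and the main subtlety is simply to reconcile Chenevier's polynomial-law formalism with the concrete $(T,D)$ description used in this paper.
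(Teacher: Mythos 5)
Your proof is correct and follows essentially the same route as the paper: both arguments unwind Chenevier's \S1.4 definition of the kernel via the degree-$2$ polarization identity. The only cosmetic difference is that the paper works with the universal bivariate form $D(xt + ys) \in A[t,s]$ and extracts the coefficients of $t^2$ and $ts$, whereas you specialize to the bilinear polarization $D(y+x) = D(y) + D(x) + T(y)T(x) - T(yx)$ over an arbitrary $A$-algebra $B$ and then carry out the base-change bookkeeping ($B$-linearity of $T_B$, checking on the generators $G$) by hand; these are equivalent, and your version makes the two directions of the equivalence slightly more explicit than the paper's one-line computation.
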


\begin{proof}  For polynomial laws of degree~$2$, we have
(cf.~Example~7.6 of~\cite{Chenevier})
\begin{equation}
\label{eq1}
D(x t + y s) = D(x) t^2 + (T(x) T(y) - T(x y)) ts + D(y) s^2.
\end{equation}
As follows from~\S1.4 of~\cite{Chenevier}, we may
compute the~$x \in \ker(P)$ by finding the~$x$ for which this expression is independent of~$t$.  Taking~$y = 1$ 
yields the equalities~$T(x) = 0$ and~$D(x) = 0$. Returning to the
case of  general~$y$, we then deduce that~$T(x y)
= T(x) T(y) = 0$.
\end{proof}

Suppose that~$H$ is a subgroup of~$G$ such that~$[h] - 1 \in \ker(P)$ for all~$h \in H$.
In this case, by abuse of notation, we say that~$\ker(P)$ contains~$H$. If~$\ker(P)$ contains~$H$, 
then~$[ghg^{-1}] - 1 \in \ker(P)$ for any~$g \in G$, and (cf.~Lemma~7.14 of~\cite{Chenevier})
the determinant~$P$ factors through~$A[G/N]$, where~$N$ is the normal closure
of~$H$.  (That is, the functions~$T$ and~$D$ on~$A[G]$ depend only on their image
in the quotient~$A[G/N]$.)
In particular, to show that a determinant on~$\OL[G_{\Q}]$ is unramified at a prime~$l$
(for example~$l = p$), it suffices to show that the kernel contains some (any) choice of inertia subgroup~$I_l$ at~$l$, or equivalently:

\begin{lemma} \label{lemma:kerneltwo} ~$I_l = H \subset G = G_{\Q}$ lies in the kernel of~$P$ if and only if:
\begin{enumerate}
\item ~$T(hg) = T(g)$ for all~$h \in H = I_l$ and~$g \in G = G_{\Q}$.
\item ~$D(h - 1) = 0$ for all~$h \in H = I_l$.
\end{enumerate}
\end{lemma}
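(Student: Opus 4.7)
The plan is to specialize Lemma~\ref{lemma:kernel} to the particular element $x = [h] - 1 \in A[G]$ for each $h \in H$. By the convention introduced just before the statement, the assertion ``$H$ lies in the kernel of $P$'' means precisely that $[h] - 1 \in \ker(P)$ for every $h \in H$. So the task reduces to checking that the two conditions of Lemma~\ref{lemma:kernel}, applied to $x = [h] - 1$, are equivalent to the two conditions listed here.

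The second conditions match on the nose: $D([h] - 1) = 0$ is literally condition (2) of the present lemma. For the first, I would use that $T$ extends $A$-linearly from $G$ to $A[G]$ (as noted in the discussion following Lemma~7.7 of~\cite{Chenevier}). Under this extension, $T(([h] - 1) y) = T(hy) - T(y)$ for every $y \in A[G]$. Since $A[G]$ is the free $A$-module on $G$, the vanishing of the $A$-linear function $y \mapsto T(hy) - T(y)$ on all of $A[G]$ is equivalent to its vanishing on the basis $G$, i.e.\ to $T(hg) = T(g)$ for every $g \in G$. This is exactly condition~(1).

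Both directions thereby collapse to unwinding definitions together with the $A$-linearity of $T$; there is no real obstacle, and I would not expect to have to invoke any further properties of the polynomial law $D$ beyond the fact that the expression $D([h] - 1)$ makes sense. The lemma is genuinely just a restatement of Lemma~\ref{lemma:kernel} in the special case where the element of $A[G]$ has the form $[h] - 1$ with $h$ a group element.
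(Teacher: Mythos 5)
The paper states Lemma~\ref{lemma:kerneltwo} without proof, treating it as an immediate specialization of Lemma~\ref{lemma:kernel} to elements of the form $x = [h]-1$, and your argument supplies exactly that: condition (2) of Lemma~\ref{lemma:kernel} becomes $D([h]-1)=0$ verbatim, and condition (1) reduces by $A$-linearity of $T$ (since $A[G]$ is free on $G$) to $T(hg)=T(g)$ for all $g\in G$. This is correct and matches the paper's intended reasoning.
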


\subsection{Ordinary Determinants} \label{section:ordpsu}

Let~$\OL$ be the ring of integers of a finite extension~$[K:\Q_p] < \infty$, let
$\varpi$ be a uniformizer of~$\OL$, and suppose that~$\OL/\varpi = k$. Let
$\Pbar = (\Tbar,\Dbar): G_{\Q} \rightarrow k$
be a degree~$2$ determinant which is unramified outside~$Np$. In practice,
it will always be taken to be modular of level~$\Gamma_1(N)$.
Let us fix, once and for all, an embedding of~$\Qbar$ into~$\Qbar_p$, and hence inclusions:
$$I_p \subset D_p \subset G_{\Q},$$
where~$I_p$ is the inertia group of~$\Q_p$,  and~$D_p = \Gal(\Qbar_p/\Q_p)$ is the decomposition group.
Let us also fix a Frobenius element~$\phi \in D_p$. There is a natural projection~$D_p \rightarrow D_p/I_p \simeq \widehat{\Z}$ whose image is
topologically generated by the image of~$\phi$.
Let~$\eps: G_{\Q} \rightarrow \Z^{\times}_p$ be the cyclotomic character; we may choose~$\phi$ so that~$\eps(\phi) = 1$.
Enlarging~$k$ if necessary, let~$\alphabar$ and~$\betabar$ be the roots of the quadratic polynomial
$$X^2 - \Tbar(\phi) X + \Dbar(\phi) = 0$$
over~$k$. 
We do not assume that these are necessarily distinct.

There are a number of slightly different definitions of ordinary Galois
representations in the literature.  Let us say that
a~$2$-dimensional representation~$\rho: G_{\Q_p} \rightarrow \GL_2(\overline{\Q}_p)$ is  ordinary  if the underlying~$2$-dimensional vector space~$V$ admits a two step filtration~$0 \subsetneq V'  \subsetneq V$ such that the action
of~$G_{\Q_p}$ on~$V'' = V/V'$ is unramified. (This coincides, for example,
with the definition of ordinary in~\cite{SkinnerWiles}.)
We furthermore say that~$\rho$ is ordinary of weight~$n$ if the action of~$G_{\Q_p}$
on~$V'$ is via an unramified twist of~$\eps^{n-1}$. By abuse of notation,
{if~$\rho: G_{\Q} \rightarrow \GL_2(\Qbar_p)$} is a global Galois representation, we say that it is ordinary
if~$\rho |_{G_{\Q_p}}$ is ordinary (respectively, ordinary of weight~$n$).
When a representation is ordinary, various relations are imposed on its associated determinant. We collect several of these relations common to all ordinary~$2$-dimensional representations of weight~$n$,
and then define that a determinant~${P = (T,D): A[G_{\Q}] \rightarrow A}$ of degree~$2$ 
to be an ``ordinary determinant of weight~$n$'' if and only if it satisfies
these conditions. Our definition includes the auxiliary data of an ``eigenvalue'' ~$\alpha \in A^{\times}$ of the Frobenius element~$\phi.$ This ``eigenvalue'' satisfies some relations shared by every value which occurs as the eigenvalue of~$\phi$ on a choice of unramified quotient of~$\rho|_{G_{\Q_p}}$ in an 2-dimensional ordinary representation of weight~$n$. We will be interested in deformations of~$\Pbar$ to Artinian local rings~$(A,\m)$ which are ordinary of weight~$n$.

 \begin{df} \label{ord} Let ~$(A,\m)$ be a Noetherian local ring with residue field ~$k$.
An  ordinary determinant ~$P: A[G_{\Q}] \rightarrow A$ of degree~$2$ and weight ~$n$ with eigenvalue ~$\alpha \in A^{\times}$ 
consists of a  pair~$(P,\alpha)$ 
where~$P = (T,D): A[G_{\Q}] \rightarrow A$ is a degree~$d = 2$
 determinant
satisfying the following properties:
\begin{enumerate}
\item ~$P(h) = (X - 1)(X - \psi(h))$ 
 for all ~$h \in I_p$, 
where ~$\psi = \eps^{n-1}$.
\item   ~$\alpha$ is a root of ~$X^2 - T(\phi) X + D(\phi)$.
 \item For all~$h \in I_p$,  $(h - \psi(h))(\phi - \alpha) \in \ker(P)$. 
Equivalently,  for all~$g \in G_{\Q}$ and~$h \in I_p$, 
 ~$$T(g(h - \psi(h))(\phi - \alpha)) =  T(g h \phi) - \psi(h) T(g \phi) - T(g h) \alpha + T(g) \psi(h) \alpha = 0.$$ 
 \end{enumerate}
\end{df}

The first two conditions of this definition are self-explanatory. The last may
be  somewhat surprising to the reader; note that
it involves a condition on general elements~$g \in G_{\Q}$ rather than simply being a condition on the decomposition group.
This turns out to be necessary, because the determinant (or pseudo-character)  associated to the decomposition group of a locally reducible representation does not
know which character comes from the quotient and which comes from the submodule. 
The idea behind this definition, as we shall see shortly below,
is to capture the notion that the product~$(h - \psi(h))(\phi - \alpha)$ is \emph{identically} zero, rather than just of the form~$\left( \begin{matrix} 0 & * \\ 0 & 0 \end{matrix}\right)$.
There is presumably a close relationship between this definition and the definition of
ordinary pseudo-characters in Wake,~Wang-Erickson  (see~\cite{wake2015ordinary}
and~\S7.3 of~\cite{WE}), although in our context it
is important that we can work in non-$p$ distinguished situations by choosing an eigenvalue of Frobenius, which amounts to a partial resolution
of the corresponding deformation rings (presumably such modifications could also be adapted to~\cite{WE}). On the other hand, we do exploit the crucial idea due to Wang-Erickson
that the notion of ordinarity for pseudo-representations should be a global rather than local
condition.
The following lemma provides a justification for the final condition above, and the proof
provides a motivation for its definition.

\begin{lemma}\label{lemma:classical} Suppose that ~$f$ is a classical  modular eigenform 
 of level ~$\Gamma_0(p) \cap \Gamma_1(N)$ with Nebentypus character~$\chi$ of
weight ~$n \ge 2$ with coefficients in ~$\OL$, and suppose that ~$\alpha$ is the ~$U_p$-eigenvalue of ~$f$.
Assume that~$f$ is ordinary (equivalently, that~$\alpha$ has trivial valuation). Then the associated
determinant~$P_f: \OL[G_{\Q}] \rightarrow \OL$
is ordinary with eigenvalue ~$\alpha$, weight~$n$, and is unramified outside~$Np$.
\end{lemma}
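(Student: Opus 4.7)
My plan is to deduce Lemma~\ref{lemma:classical} from the classical local-global compatibility for ordinary modular forms of weight $n\ge 2$, combined with the kernel criterion of Lemma~\ref{lemma:kernel}. By Deligne's theorem there is a continuous representation $\rho_f\colon G_{\Q}\to\GL_2(\OL)$, unramified outside $Np$, with $\tr\rho_f(\Frob_\ell)=a_\ell(f)$ for $\ell\nmid Np$ and $\det\rho_f=\chi\eps^{n-1}$. Setting $P_f:=(\tr\circ\rho_f,\,\det\circ\rho_f)$ then gives a determinant unramified outside $Np$, so only the three conditions of Definition~\ref{ord} need to be verified. Throughout, write $\psi=\eps^{n-1}$.

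Next I would invoke the theorem of Mazur--Wiles (and Wiles) describing the local shape of $\rho_f$ at $p$ for an ordinary form: in a suitable basis one has
$$\rho_f|_{D_p}\;\sim\;\begin{pmatrix}\psi\cdot\lambda(\beta) & \ast \\ 0 & \lambda(\alpha)\end{pmatrix},$$
where $\lambda(x)$ denotes the unramified character of $D_p$ sending $\phi$ to $x$, and $\beta=\chi(p)p^{n-1}/\alpha$. This is the only substantive ingredient. Conditions (1) and (2) of Definition~\ref{ord} then drop out immediately: on $I_p$ the unramified characters are trivial, so $\rho_f(h)=\bigl(\begin{smallmatrix}\psi(h) & \ast_h \\ 0 & 1\end{smallmatrix}\bigr)$ and $P_f(h)=(X-\psi(h))(X-1)$; at our chosen $\phi$ (with $\eps(\phi)=1$), the diagonal of $\rho_f(\phi)$ is $(\beta,\alpha)$, so $\alpha$ is a root of $X^2-T(\phi)X+D(\phi)$.

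For condition (3) the argument is a direct matrix computation in the above basis: $\rho_f(h)-\psi(h)I$ has zero first column while $\rho_f(\phi)-\alpha I$ has zero second row, so their product in $\End(V)$ vanishes identically, that is, $\rho_f\bigl((h-\psi(h))(\phi-\alpha)\bigr)=0$. Consequently $T_f\bigl(g(h-\psi(h))(\phi-\alpha)\bigr)=0$ for every $g\in G_{\Q}$ and $D_f\bigl((h-\psi(h))(\phi-\alpha)\bigr)=0$, so Lemma~\ref{lemma:kernel} places $(h-\psi(h))(\phi-\alpha)$ in $\ker(P_f)$. The main obstacle is the ordinary local description of $\rho_f$; once that is granted, the lemma reduces to linear algebra, and indeed the entire point of condition~(3) of Definition~\ref{ord} is that this matrix identity holds on the nose rather than merely up to the upper-triangular $\ast$-term.
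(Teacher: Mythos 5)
Your argument is essentially the paper's own proof: Deligne's construction together with the ordinary local description of $\rho_f|_{D_p}$ (unramified quotient on which $\phi$ acts by $\alpha$, subcharacter equal to $\psi$ on $I_p$), followed by the observation that $(\rho_f(h)-\psi(h))(\rho_f(\phi)-\alpha)$ vanishes identically as a matrix and taking traces against $\rho_f(g)$. Two minor remarks: since $\phi$ is chosen with $\eps(\phi)=1$, the unramified twist in the subcharacter sends $\phi$ to $\chi(p)/\alpha$ rather than $\chi(p)p^{n-1}/\alpha$ (harmless, because your verification of conditions (1)--(3) uses only the inertia action of the sub and the eigenvalue $\alpha$ on the quotient), and the paper's Fact~\ref{facts} spells out separately the case where $f$ is new at $p$ --- there ordinarity forces $n=2$ and the local shape comes from Atkin--Lehner theory rather than $p$-stabilization --- a case your blanket appeal to the Mazur--Wiles/Wiles theorem must be understood to cover.
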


Note that~$f$ in Lemma~\ref{lemma:classical} need not be new at either the prime~$p$ or primes dividing~$N$.

\begin{proof} Since ~$\OL$ has characteristic zero,  there is a Galois representation (via \cite{Deligne}) 
$$\rho_f: G_{{\Q,Np}} \rightarrow \GL_2(\overline{\Q}_p)$$
associated to~$f.$ The determinant~$P_f = (T_f,D_f)$ is (by definition) the determinant associated to the representation~$\rho_f.$    
Since~$\rho_f$ factors through~$G_{\Q,Np}$, this determinant is unramified at primes outside~$Np$.
Let~{$\lambda_{\alpha}: G_{\Q_p} \rightarrow \overline{\Q}_p^{\times}$}
denote the unramified character which sends~$\Frob_p$ to~$\alpha$.
We collect the following facts concerning the Galois representation~$\rho_f$:
\begin{facts} 
\label{facts} The representation~$\rho_f$ has the following properties:\footnote{\textit{Some References:}
The fact that~$\rho_f$ is unramified outside~$Np$ already follows
from the original construction of Deligne~\cite{Deligne}. Since the Nebentypus character has
conductor dividing~$N$, the corresponding Galois representation~$\chi$ is
certainly unramified outside~$N$. The
second claim follows immediately from~\cite[Theorem~2]{WilesOrdinary}.
Consider the third claim, so we are assuming that~$f$ is new at~$p$.
If one writes~$\chi = \chi_p \chi_{N}$ where~$\chi_p$ and~$\chi_N$ are characters
corresponding to the identification~$(\Z/Np \Z)^{\times} = (\Z/p \Z)^{\times} \oplus (\Z/N \Z)^{\times}$,
then (by assumption)~$\chi_p$ is trivial.
It follows (see~\S1 of~\cite{AL}) that~$f$ is an eigenform for operator~$W_p$
with eigenvalue~$\lambda_p(f)$ satisfying~$\lambda^2_p(f) = \chi(p)$ (\cite[Proposition ~1.1]{AL}). On the other hand,  
by~\cite[Theorem~2.1]{AL}, we deduce  that~$\alpha^2 =  \lambda^2_p(f) p^{n - 2} = \chi(p) p^{n-2}$.
Under our assumption that~$\alpha$  is a~$p$-adic unit, this can only occur when the weight~$n = 2$.
When~$n = 2$, however, we can appeal to~\cite[Theorem~3.1(e)]{DDT} which gives
a detailed description of the local properties of Galois representations associated to
ordinary forms. Finally, the identification of~$\chi |_{D_p}$ with~$\lambda^2_{\alpha}$
follows either by considering determinants or the identity~$\alpha^2 = \chi(p)$ discussed above.} 
\begin{enumerate}
\item The representation~$\rho_f$ is unramified outside~$Np$. The
trace and (classical) determinant of~$\rho_f(\Frob_{\ell})$ are equal to~$a_{\ell}(f)$ and~$\ell^{n-1} \chi(\ell)$
respectively. The (classical) determinant of~$\rho_f$ is the character~$\chi \eps^{n-1}$, where~$\chi$
is unramified outside~$N$.
\item If~$f$ is old at level~$p$, and the corresponding eigenform~$g$ of level~$\Gamma_1(N)$ has~$T_p$ eigenvalue~$a_p$,
then~$\alpha$ is the unit root of~$X^2 - a_p X + p^{n-1} \chi(p)$, and
$$\rho_f |_{D_p}  = \rho_g |_{D_p} \sim \left( \begin{matrix} \eps^{n-1} \lambda^{-1}_{\alpha} \chi & * \\
0 & \lambda_{\alpha} \end{matrix} \right).$$
\item \label{facts:new} If~$f$ is new at level~$p$, then~$n = 2$, 
$$\rho_f |_{D_p} \sim \left( \begin{matrix} \eps \lambda_{\alpha} & * \\
0 & \lambda_{\alpha} \end{matrix} \right),$$
and~$\chi |_{D_p} \simeq \lambda^2_{\alpha}$.
\end{enumerate}
\end{facts}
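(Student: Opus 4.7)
My plan is to assemble Facts~\ref{facts} from three classical sources: Deligne's construction of $\rho_f$, Atkin-Lehner theory of old and new vectors at $p$, and Wiles' theorem on the local structure at $p$ of Galois representations attached to ordinary eigenforms.

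For part (1), I would invoke Deligne's construction \cite{Deligne}: the level $\Gamma_0(p) \cap \Gamma_1(N)$ forces $\rho_f$ to be unramified outside $Np$, and the Eichler-Shimura congruence computes the trace and (classical) determinant of $\rho_f(\Frob_\ell)$ for $\ell \nmid Np$ in terms of $a_\ell(f)$ and $\ell^{n-1}\chi(\ell)$. For the assertion that $\chi$ is unramified outside $N$, I note that $\chi$ is a character of $(\Z/Np\Z)^\times$, but since the level at $p$ is $\Gamma_0(p)$ rather than $\Gamma_1(p)$, the Nebentypus is trivial modulo $p$; under the decomposition $(\Z/Np\Z)^\times \simeq (\Z/p\Z)^\times \times (\Z/N\Z)^\times$, we see $\chi$ factors through the second factor.

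For part (2), $f$ being old at $p$ means $f$ arises as a $p$-stabilization of a newform $g$ of level $\Gamma_1(N)$ with $T_p$-eigenvalue $a_p$. The two $p$-stabilizations of $g$ are $U_p$-eigenforms whose eigenvalues are the roots $\alpha, \beta$ of $X^2 - a_p X + p^{n-1}\chi(p)$. Since $\alpha\beta = p^{n-1}\chi(p)$ has positive $p$-adic valuation (using $n \ge 2$), exactly one root is a unit, which by hypothesis is $\alpha$. The identification $\rho_f|_{D_p} = \rho_g|_{D_p}$ is tautological, since $p$-stabilization does not alter the underlying Galois representation, and the claimed upper-triangular shape is precisely the content of Wiles' ordinary theorem \cite[Theorem~2]{WilesOrdinary} applied to $g$.

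Part (3) is the main obstacle. Assuming $f$ is new at $p$, I would write $\chi = \chi_p\chi_N$ under the decomposition $(\Z/Np\Z)^\times \simeq (\Z/p\Z)^\times \times (\Z/N\Z)^\times$; as in part (1), $\chi_p$ is trivial. Atkin-Lehner theory \cite[Proposition~1.1]{AL} then shows that $f$ is an eigenform for the Atkin-Lehner operator $W_p$ with eigenvalue $\lambda_p(f)$ satisfying $\lambda_p(f)^2 = \chi(p)$, and \cite[Theorem~2.1]{AL} relates the $U_p$- and $W_p$-eigenvalues of a $p$-new form to yield $\alpha^2 = \lambda_p(f)^2 p^{n-2} = \chi(p)p^{n-2}$. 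Since $\chi(p)$ is a $p$-adic unit, the ordinarity hypothesis $v_p(\alpha) = 0$ forces $n = 2$. Once $n = 2$, the refined local description of $\rho_f|_{D_p}$ for $p$-new forms is \cite[Theorem~3.1(e)]{DDT}, and the final identity $\chi|_{D_p} \simeq \lambda_\alpha^2$ follows either by matching determinants against part (1) or directly from $\alpha^2 = \chi(p)$. The delicate point is the square-root relation $\alpha^2 = \chi(p)p^{n-2}$: it requires viewing $f$ simultaneously as a $W_p$-eigenvector and a $U_p$-eigenvector, but once established it is exactly the input that rules out higher weights.
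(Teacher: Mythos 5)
Your proposal is correct and follows essentially the same route as the paper's own proof: Deligne's construction and the triviality of $\chi_p$ for part (1), Wiles' ordinary theorem \cite[Theorem~2]{WilesOrdinary} for part (2), and for part (3) the Atkin--Lehner results \cite[Proposition~1.1, Theorem~2.1]{AL} giving $\alpha^2 = \chi(p)p^{n-2}$, which together with ordinarity forces $n=2$, followed by \cite[Theorem~3.1(e)]{DDT} and the determinant comparison for $\chi|_{D_p} \simeq \lambda_\alpha^2$. The extra details you supply (the unit-root argument in part (2), the explicit reason $\chi_p$ is trivial) are consistent with, and merely expand on, the paper's footnote.
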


Using these properties, we see that the required conditions for~$P_f$ to be ordinary
with eigenvalue~$\alpha$ are easily met with the possible exception of the 
final condition. For this, note  that from the explicit descriptions above there
exists a basis such that:
$$\rho_f |_{I_p} = \left(\begin{matrix} \psi & * \\ 0 & 1 \end{matrix}\right), \qquad
\rho_f(\phi) = \left(\begin{matrix} \chi(\phi) \alpha^{-1} &   *  \\ 0 & \alpha \end{matrix} \right),$$
where~$\det(\rho_f) = \eps^{n-1} \chi$.
We find, 
with~$h \in I_p$, that,  in ~$M_2(\OL)$,
$$(\rho_f(h) - \psi(h))(\rho_f(\phi) - \alpha) = \left( \begin{matrix} 0 & 0 \\ 0 & 0 \end{matrix} \right).$$
It follows that
 ~$$ T_f(s(h - \psi(h))(\phi - \alpha)) =   \tr(\rho_f(s)(\rho_f(h) - \psi(h))(\rho_f(\phi) - \alpha)) = 0$$ for all~$s\in G_{\Q}.$\end{proof}

We now fix our choice of~$\Pbar$. Let ~$\Pbar: k[G_{\Q}] \rightarrow k$ be the determinant associated to
a mod~$\varpi$ weight one eigenform~$g$ of level~$\Gamma_1(N),$ i.e.~the determinant associated to the Galois representation classically attached to $g$ \cite[Proposition  11.1]{gross1990tameness}. Suppose that~$g$ has Nebentypus character~$\chi$ and~$T_p$-eigenvalue~$a_p$,  and  let~$\alphabar$ and~$\betabar$ be the roots of~$X^2 - a_p X + \chi(p),$ which we assume (enlarging~$\mathcal{O}$ if necessary) are~$k$-rational.

\begin{lemma} \label{implicit} Let~$n \equiv 1 \mod (p-1)$ be an integer. The determinant~$\Pbar$ is ordinary of degree~$2$
and weight~$n$ with eigenvalue~$\alphabar$ and is unramified outside~$Np$.
If~$n > 1$, there is an eigenform~$f$ of level~$\Gamma_1(N) \cap \Gamma_0(p)$ 
and weight~$n$ which is ordinary at~$p$ for which the~$U_p$-eigenvalue of~$f$ is congruent to~$\alphabar$ mod~$\varpi$ such that~$\Pbar = \Pbar_f.$
\end{lemma}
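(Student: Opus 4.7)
The argument splits into two parts corresponding to the two sentences.

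For the first, the key input is Gross's theorem \cite[Proposition~11.1]{gross1990tameness}, asserting that the Galois representation attached to a mod~$\varpi$ weight one eigenform of level prime to~$p$ is unramified at~$p$. Consequently $\Pbar$ is unramified outside~$Np$ and satisfies $\Tbar(h) = 2$, $\Dbar(h) = 1$ for $h \in I_p$. The hypothesis $n \equiv 1 \pmod{p-1}$ forces $\psi := \eps^{n-1}$ to reduce trivially modulo~$\varpi$ on $G_{\Q}$. Definition~\ref{ord} then reduces to three routine checks: condition~(1) reads $\Pbar(h) = (X-1)(X-\psi(h)) = (X-1)^2$, matching $X^2 - 2X + 1$; condition~(2) is the defining property of~$\alphabar$; and for condition~(3), $h - \psi(h) = h - 1 \in \ker(\Pbar)$ since $I_p \subset \ker(\Pbar)$, so right-multiplying by $\phi - \alphabar$ lands in the two-sided ideal~$\ker(\Pbar)$.

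For the second part, the plan is to construct a mod~$\varpi$ ordinary weight~$n$ eigenform by multiplying $g$ by a power of the Hasse invariant and then $p$-stabilizing, and afterwards to descend to characteristic zero via Deligne-Serre. Writing $n - 1 = k(p-1)$ with $k \geq 1$, let $A$ denote the mod~$\varpi$ Hasse invariant (weight~$p-1$, $q$-expansion~$1$) and set $F := g \cdot A^k$: a mod~$\varpi$ weight~$n$ eigenform of level~$\Gamma_1(N)$ whose $q$-expansion agrees with that of~$g$. The congruence $\ell^{n-1} \equiv 1 \pmod p$, which is exactly our hypothesis on~$n$, ensures $F$ retains the $T_\ell$- and $\langle \ell \rangle$-eigenvalues of~$g$ for $\ell \nmid Np$. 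A direct $q$-expansion computation, using $U_p g = a_p g - \chi(p) V g$ (valid because $T_p g = a_p g$ in weight one), then shows that
\[
F_\alpha := F - \betabar \cdot V(F)
\]
is a weight~$n$ eigenform of level $\Gamma_1(N) \cap \Gamma_0(p)$ with $U_p$-eigenvalue~$\alphabar$. Since $\alphabar \betabar = \chi(p) \in k^\times$, the eigenvalue $\alphabar$ is a unit, so~$F_\alpha$ is ordinary.

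The final step is to lift $F_\alpha$ to characteristic zero. I would apply the Deligne-Serre lifting lemma to the $\OL$-Hecke module of weight~$n$ forms of level $\Gamma_1(N) \cap \Gamma_0(p)$; this is valid since $n \geq 2$ (if $p \geq 3$ then $n \geq p$, and if $p = 2$ then $n > 1$ already gives $n \geq 2$). After possibly enlarging~$\OL$, one obtains a classical eigenform~$f$ of the same weight and level whose Hecke eigenvalues lift those of~$F_\alpha$; in particular its $U_p$-eigenvalue lifts~$\alphabar$ and so is a unit, so $f$ is ordinary. Finally, $\Pbar_f$ and $\Pbar$ agree on $\Frob_\ell$ for $\ell \nmid Np$: the traces agree by construction, and the determinants satisfy $\tilde{\chi}(\ell) \ell^{n-1} \equiv \chi(\ell) \pmod{\varpi}$, again thanks to $\ell^{n-1} \equiv 1 \pmod p$. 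Chebotarev density together with the rigidity of degree~$2$ semisimple determinants then gives $\Pbar_f = \Pbar$. The main obstacle I anticipate is arranging the Deligne-Serre lift so that its $U_p$-eigenvalue reduces to the chosen root~$\alphabar$ rather than its companion~$\betabar$; this should be handled by localizing the characteristic zero Hecke algebra at the maximal ideal containing $U_p - \alphabar$ along with the other eigenvalues of~$F_\alpha$, so that any classical eigensystem lying over that ideal is forced to have $U_p$-eigenvalue congruent to~$\alphabar$.
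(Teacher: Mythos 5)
Your first paragraph rests on an input that the paper cannot (and deliberately does not) use: that the residual representation attached to a Katz mod~$\varpi$ weight one eigenform of level prime to~$p$ is unramified at~$p$. The paper cites \cite[Proposition~11.1]{gross1990tameness} only for the construction of the representation attached to~$g$; its unramifiedness at~$p$ is precisely the kind of statement this paper is written to establish, and before this paper it was known only under restrictions (Gross's argument involves unchecked compatibilities and the $p$-distinguished hypothesis $\alphabar \neq \betabar$, with the remaining cases settled by Coleman--Voloch only for odd~$p$; for $p = 2$ with $\alphabar = \betabar$ --- a case of central interest here --- only the partial results of Wiese \cite{Wiese} were available, as the introduction points out). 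That is why Lemma~\ref{implicit} asserts only that $\Pbar$ is unramified outside~$Np$, and why ordinarity in the sense of Definition~\ref{ord} is a strictly weaker condition than triviality of~$\Pbar$ on~$I_p$. The paper's logic runs in the opposite direction from yours: it first produces the characteristic zero ordinary eigenform~$f$ of weight~$n \geq 2$ and level $\Gamma_1(N) \cap \Gamma_0(p)$ with $\Pbar_f = \Pbar$ and $U_p$-eigenvalue congruent to~$\alphabar$, and then obtains the ordinarity of~$\Pbar$ (for every $n \equiv 1 \bmod (p-1)$ at once, since $\psi = \eps^{n-1}$ is trivial mod~$\varpi$) by reducing the conclusion of Lemma~\ref{lemma:classical} applied to~$f$. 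So the first sentence of the lemma is deduced from the second; your version of the first sentence is circular in spirit and invokes a result not available in the generality needed.

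Your second part also has the steps in an order that creates a genuine gap. You stabilize modulo~$\varpi$ and then propose to lift at level $\Gamma_1(N) \cap \Gamma_0(p)$; but the lifting input used in the paper (\cite[Theorem~1.7.1]{Katz}, i.e.\ surjectivity of reduction in weights $\geq 2$ via the smooth fine moduli curve $X_1(N)$) is only available at level prime to~$p$, and the Deligne--Serre lemma \cite[Lemme~6.11]{DS} produces an eigenform lift only once the mod~$\varpi$ form is known to lie in the reduction of a characteristic zero lattice --- which, at $p$-divisible level, is exactly what would need proof. There is also a bookkeeping problem: on mod~$\varpi$ forms of level $\Gamma_1(N)$ the operator $V$ multiplies the weight by~$p$, so $F - \betabar\, V(F)$ is not a weight-$n$ form, and $U_p$ is not an operator on these spaces. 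The paper's ordering avoids both issues: it forms $h = Ag - \betabar\, \Vg$ in weight~$p$ and level~$\Gamma_1(N)$ (both $Ag$ and $\Vg$ have weight~$p$), uses Gross's relations $T_p(\Vg) = Ag$ and $T_p(Ag) = a_p\, Ag - \chi(p)\Vg$ to get $T_p h = \alphabar h$, raises the weight with the Hasse invariant, lifts at level $\Gamma_1(N)$ by Katz plus Deligne--Serre, and only then takes the ordinary $p$-stabilization in characteristic zero to reach level $\Gamma_1(N) \cap \Gamma_0(p)$ with $U_p$-eigenvalue congruent to~$\alphabar$.
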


By symmetry, the result holds with~$\alphabar$ replaced by~$\betabar$.

\begin{proof} Since~$\eps^{n-1}$ is trivial mod~$\varpi$ for~$n \equiv 1 \mod (p-1)$, if~$\Pbar$
is ordinary with eigenvalue~$\alphabar$ for one such~$n$, it is ordinary for all such~$n$.
Suppose we can construct an eigenform~$h$ modulo~$\varpi$  of 
level~$\Gamma_1(N)$
of weight~$p$ and such that the~$T_p$-eigenvalue of~$h$ is congruent to~$\alphabar$ mod~$\varpi$,
and such that~$\Pbar_h = \Pbar$. By multiplying by powers of the Hasse invariant, we 
deduce that there also exists such a form in any weight~$n \equiv 1 \mod (p-1)$ such that~$n > 1$.
All mod~$\varpi$ modular forms in weights~$n > 1$ and level~$\Gamma_1(N)$  lift to characteristic zero. (This follows as in~\cite[Theorem~1.7.1]{Katz}, 
the running assumption that~$N \ge 5$ guaranteeing that~$X_1(N)$ is a fine moduli space.)
Moreover, using the Deligne--Serre lifting lemma (\cite[Lemme~6.11]{DS}), one can always choose a lift~$h$ which is an eigenform for all the Hecke operators.
The lifted form~$h$ of weight~$\Gamma_1(N)$ has weight~$n > 1$ and~$T_p$-eigenvalue~$\alphabar \mod \varpi$. But now the ordinary stabilization~$f$ of~$h$ of level~$\Gamma_1(N) \cap \Gamma_0(p)$ has has~$U_p$-eigenvalue~$\alphabar \mod \varpi$, and~$\Pbar_f = \Pbar_h = \Pbar$, as required. Finally, we deduce from Lemma~\ref{lemma:classical}  applied to~$f$ that~$\Pbar$ is ordinary with
eigenvalue~$\alphabar$ (of weight~$n$ and unramified outside~$Np$).
Thus it remains to construct~$h$ from~$g$.

If~$A$ is the Hasse invariant, then~$Ag$ is a modular form
mod~$\varpi$ of level~$\Gamma_1(N)$ and weight~$p$ which is an eigenform
for all Hecke operators except for~$T_p$, and moreover has the same eigenvalues as~$g$. The same is true of~$T_p(Ag)$ and also~$A(T_p g)$
(the latter is just~$a_pAg$). On the level of~$q$-expansions, there are equalities
$Ag = g$ and~$A T_p(g) - T_p(A g)  = \Vg$ respectively. 
Hence~$h = g - \betabar \Vg$ is a weight~$p$
 modular eigenform mod~$\varpi$ of level~$\Gamma_1(N)$ with~$\Pbar_f = \Pbar$
 and with~$T_p$-eigenvalue~$\alphabar$. 
  To see that~$T_p h = \alphabar h$, note (cf.~\cite[\S4]{gross1990tameness},
 especially~(4.7))
  that~$T_p(\Vg) = Ag$
 and~$T_p Ag = a_p Ag -\chi(p) \Vg$, and hence
  $$\begin{aligned} T_p h = & \  T_p (A g - \betabar \Vg)  \\
  = & \  a_p A g - \chi(p) \Vg - \betabar A g  \\
= & \  (\alphabar + \betabar) Ag - \alphabar \betabar \Vg - \betabar Ag \\
= & \  \alphabar(A g - \betabar \Vg) = \alphabar h. \end{aligned}
$$
\end{proof}

Let~$R = R^{\univ}$ denote the  universal deformation ring of~$\Pbar$ (cf.~\cite[Proposition ~7.59]{Chenevier}) unramified outside~$Np$. It pro-represents
the functor which, for Artinian local~$W(k)$-algebras~$(A,\m)$ with residue field~$A/\m = k$,
consists of determinants~$P = (T,D)$ valued in~$A$ whose mod~$\m$ reduction is~$\Pbar$.
Let~$P^{\univ} = (T^{\univ},D^{\univ})$ denote the corresponding universal determinant.
 We define a mild variant on this ring by 
 considering such determinants: ~$P = (T,D): A[G_{\Q}] \rightarrow A$ 
 \emph{together} with a root ~$\alpha$ of ~$X^2 - T(\phi) X + D(\phi)$. The result is an extension ~$\Rtw$ of ~$R$ given by
$$\Rtw = R[\alpha]/(\alpha^2 - T^{\univ}(\phi) \alpha + D^{\univ}(\phi)).$$
The ring~$R$ is a local~$W(k)$-algebra, but the ring~$\Rtw$ is a semi-local~$W(k)$-algebra with either one or two maximal ideals. It has~$2$
maximal ideals precisely when the polynomial~$\alpha^2 - \overline{T}(\phi) \alpha + \overline{D}(\phi) \in k[\alpha]$ is
separable. 

\begin{df} \label{semi} Let~$\Ddag_n(A)$ denote the functor which, for Artinian local rings~$(A,\m)$ with residue field~$A/\m = k$,
consists of ordinary determinants~$(P,\alpha_0)$ of weight~$n$ unramified outside~$Np,$ where~$P$ is a deformation of~$\Pbar$ to $A$, and~$n \equiv 1 \mod (p-1)$ is a positive integer.
\end{df}

Note that elements in~$\Ddag_n(k)$ are in bijection with choices of ~$\alphabar \in k$ so that~$\Pbar$ is ordinary of weight~$n$ with eigenvalue~$\alphabar$.
By Lemma \ref{implicit}, such a choice of eigenvalue exists. Furthermore since~$\alphabar$ is a root~$X^2 - \Tbar(\phi) X+ \Dbar(\phi),$ the size of ~$\Ddag_n(k)$ is at most 2. For each root ~$\alphabar \in k$ of ~$X^2 - \Tbar(\phi) X+ \Dbar(\phi),$ consider the sub-functor ~$\widetilde{D}_n^{\dagger,\alphabar}(A) \subseteq \Ddag_{n}(A)$ consisting of pairs with ~$(P,\alpha_0)$ such that~$\alpha_0 \equiv \alphabar \mod \m.$ The functor~$\Ddag_{n}$ decomposes as the coproduct 
~$$\Ddag_{n}(A) = \coprod_{(\overline{P},\alphabar) \in \Ddag_n(k)} \widetilde{D}_n^{\dagger,\alphabar}(A),$$ and each of the sub-functors~$\widetilde{D}_n^{\dagger,\alphabar}$ are pro-represented by a (potentially trivial) Noetherian local~$W(k)$-algebra~$\Rtw_n^{\dagger,\alphabar}.$  By abuse of terminology, we will say~$\Ddag_n$ is pro-represented by the semi-local ring~$$\Rtd_n := \bigoplus_{(\overline{P},\alphabar) \in \Ddag_n(k)}\Rtw_n^{\dagger,\alphabar}.$$ Explicitly, if~$\wP^{\univ}$ is the base change of~$P^{\univ}$ to the~$R$-algebra~$\Rtw,$ then~$\Rtd_n$ is the quotient of~$\Rtw$ by the ideal generated by all the relations which obstruct~$P^{\univ}$ from being ordinary of weight~$n$ with eigenvalue~$\alpha.$ The universal determinant~$P_n^{\dag,\univ}$ is base change of~$\wP^{\univ}$ to~$\Rtd_n$ and the universal eigenvalue is~$\alpha.$

The determinant~$P^{\dag,\univ}$ itself is valued in the subring~$\Rd_n$ of~$\Rtd_n,$ which is the image of~${R \subset \Rtw}$. However,
the element~$\alpha$ will not, in general, lie in~$\Rd_n$. The extra data of~$\alpha$ records, implicitly, the ``choice'' of realizing the corresponding determinant as ordinary. (The same determinant~$P$
can in principle be realized as an ordinary determinant~$(P,\alpha)$ for different values of~$\alpha$.)

The following result is the key proposition which
allows us to prove that certain ordinary determinants are unramified. The idea is that, given a representation
which is ordinary, the more the representation is ramified, the more the choice of ordinary eigenvalue~$\alpha$ is pinned down
by the Galois representation, because the ramification structure gives a partial filtration on the representation which mirrors
the ordinary filtration. The extreme case, in which~$\alpha$ cannot be distinguished from the other root~$\alpha^{-1} D(\phi)$
of the characteristic  polynomial of~$\phi$, should only occur when the representation is unramified. While these claims
are obvious for~$\Qbar_p$-valued representations, the key property of our definition is that one can prove this for any quotient
of~$\Rtd_n$.

\begin{prop} \label{prop:key}
Let~$\Rtd_n \rightarrow \Sw$ be a surjective homomorphism of~$W(k)$-algebras,  
and let~$S$ denote the image of~$\Rd_n$ in~$\Sw$. Suppose that~$\Sw/S$ is a free~$S$-module
of rank one, or equivalently, that the annihilator of~$\Sw/S$ as an~$S$-module is trivial.
Then the corresponding
determinant~$P$ valued in~$S$ is unramified.
\end{prop}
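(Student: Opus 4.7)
The plan is to verify the unramifiedness criterion of Lemma~\ref{lemma:kerneltwo} for the determinant $P$ valued in $S$. That lemma reduces matters to two conditions: $D(h-1) = 0$ and $T(gh) = T(g)$ for all $h \in I_p$ and $g \in G_{\Q}$. I expect the first to fall out formally from condition~(1) of Definition~\ref{ord}, while all the real work will concentrate on the second.

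For $D(h-1) = 0$: substituting $x = h$, $y = -1$, $t = s = 1$ into the polynomial-law identity~(\ref{eq1}) gives
\begin{equation*}
D(h - 1) = D(h) + T(h)\,T(-1) - T(-h) + D(-1).
\end{equation*}
Using $T(h) = 1 + \psi(h)$ and $D(h) = \psi(h)$ from Definition~\ref{ord}(1), together with $T(-1) = -2$ (linearity of $T$) and $D(-1) = 1$ (degree-$2$ homogeneity of $D$), the right hand side collapses to $0$. So this condition holds purely formally, independent of the hypothesis of the proposition.

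For $T(gh) = T(g)$, the main tool is ordinary condition~(3), which puts $(h - \psi(h))(\phi - \alpha)$ in $\ker(P_n^{\dagger,\univ})$. By Lemma~\ref{lemma:kernel}, tracing against an arbitrary $g \in G_{\Q}$ and regrouping in $\Sw$ gives
\begin{equation*}
T(gh\phi) - \psi(h)\,T(g\phi) \;=\; \alpha\bigl(T(gh) - \psi(h)\,T(g)\bigr).
\end{equation*}
Both bracketed expressions lie in $S$, since $T$ factors through $\Rd_n$ (whose image in $\Sw$ is $S$) and $\psi(h) \in \Z_p^\times \subset S$. Here the hypothesis enters: because $\Sw = S + S\alpha$ (as $\alpha$ satisfies a monic quadratic over $S$) and $\Sw/S$ is free of rank one, the elements $1$ and $\alpha$ are $S$-linearly independent in $\Sw$. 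Hence both brackets must vanish, yielding $T(gh) = \psi(h)\,T(g)$ in $S$.

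Finally, setting $g = 1$ collapses this to $T(h) = 2\psi(h)$, which against $T(h) = 1 + \psi(h)$ forces $\psi(h) = 1$ in $S$ for every $h \in I_p$. Substituting back, $T(gh) = T(g)$, completing the verification. The main conceptual obstacle is the passage from ``$A = \alpha B$ in $\Sw$'' with $A, B \in S$ to ``$A = B = 0$''; this is exactly the role of the freeness hypothesis and the whole reason for introducing the auxiliary eigenvalue $\alpha$ into the picture. Everything else is bookkeeping against Definition~\ref{ord}.
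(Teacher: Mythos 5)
Your proof is correct and follows essentially the same route as the paper: verify the two conditions of Lemma~\ref{lemma:kerneltwo}, with $D(h-1)=0$ coming formally from Definition~\ref{ord}(1) via Equation~\eqref{eq1}, and $T(gh)=T(g)$ coming from rearranging ordinary condition~(3) and using the freeness hypothesis to kill $T(gh)-\psi(h)T(g)$, then specializing to $g=1$ to force $\psi(h)=1$. The only cosmetic difference is that you phrase the hypothesis as $S$-linear independence of $1$ and $\alpha$ in $\Sw$, while the paper works in $\Sw/S$ and uses triviality of the annihilator of the generator $\alpha$; these are equivalent.
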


\begin{proof} 
We first verify that~$D(h - 1) = 0$ without any assumptions.
From first condition of Definition~\ref{ord} 
we see that~$D(h) = \psi(h)$ and $T(h) = 1 + \psi(h)$, and thus,
from Equation~\eqref{eq1} in the proof
of Lemma~\ref{lemma:kernel}, we deduce that
$$D(h - 1) = D(h) - (T(h)T(1) - T(h)) + D(1) = \psi(h) - (\psi(h) + 1) + 1 = 0.$$

We now turn to the second condition of
Lemma~\ref{lemma:kerneltwo}.
The module~$\Sw/S$ is a cyclic~$S$-module generated by~$\alpha$, so it is free
if and only if the annihilator of~$\alpha$ is trivial. 
We have by definition the identity (for~$s \in G_{\Q}$ and~$h \in I_p$ and~$\psi = \eps^{n-1}$)
 ~$$T(s h \phi) - \psi(h) T(s \phi) - T(s h) \alpha + T(s) \psi(h) \alpha = 0.$$
 We may re-arrange this to obtain the identity:
 ~$$\alpha (T(sh) - T(s) \psi(h)) = T(s h \phi) - \psi(h) T(s \phi).$$
  Note that the value~$T(s)$
for any~$s \in G_{\Q}$ lands in~$S$, as does the image of any element of~$W(k)$,
and hence it follows that
$$ \alpha (T(sh) - T(s) \psi(h)) = 0 \in \Sw/S.$$
 Take~$g$ to be the identity, so~$T(s h) = T(h) = 1 + \psi(h)$ and~$T(s) = 2$. Then we deduce that
 ~$$ \alpha (1 - \psi(h)) = 0 \in \Sw/S$$
 for all~$h \in I_p$.
 If~$\Sw/S$ is free, then its annihilator of~$\alpha$ is trivial, and thus~$\psi(h) = 1$ for all~$h$. But we 
 then deduce 
 for the same reason that
 ~$T(sh) - T(s) \psi(h) = T(sh) - T(s) = 0$
 for all~$s \in G_{\Q}$ and~$h \in I_p$, from which it follows by Lemma~\ref{lemma:kerneltwo} 
 (note that~$T(s h) = T(h s)$) that~$I_p$
 is contained in the kernel.
\end{proof}

\section{Galois Deformations}

By Lemma \ref{implicit}, our fixed determinant  ~$\Pbar = (\Tbar,\Dbar): k[G_{\Q}] \rightarrow k$  is associated to 
an ordinary mod~$\varpi$
eigenform of level~$\Gamma_1(N) \cap \Gamma_0(p)$ in each weight~$n\geq 2$ satisfying~$n \equiv 1 \mod p-1.$ 
 Given our choice of Frobenius element~$\phi \in D_p \subset G_{\Q}$, recall that ~$\alphabar$ and~$\betabar$ are the roots of the polynomial
$$\Pbar(\phi) = X^2 - \Tbar(\phi) X + \Dbar(\phi).$$
 We start by considering determinants arising from forms of higher weight. 

\begin{lemma} \label{lemma:higher}
Let~$n \geq 2$ be an integer such that ~$n \equiv 1 \mod p-1.$ 
Let~$\wT_n$ denote the~$\OL$-algebra of endomorphisms
of
$$M_n(\Gamma_0(p) \cap \Gamma_1(N),\OL)$$
 generated
by Hecke operators~$T_{\ell}$  and~$\langle \ell \rangle$ for~$\ell$ prime to~$Np$, 
together with~$U_p$. 
 Let ~$\m$ denote the ideal of~$\wT_n$ generated by~$\varpi$ and by any lift in~$\wT_n$
 of the following elements of~$\wT_n/\varpi$: the operators ~$T_{\ell} - \Tbar(\Frob_{\ell})$ 
 and~$\langle \ell \rangle \ell^{n-1} - \Dbar(\Frob_{\ell})$  for~$(\ell,Np) = 1$,
and~$(U_p - \alphabar)(U_p - \betabar)$.  Assume that~$\Pbar$ is associated to
an ordinary mod~$\varpi$
eigenform of level~$\Gamma_1(N) \cap \Gamma_0(p)$ and weight~$n$ with~$U_p$-eigenvalue
congruent to either~$\alphabar$ or~$\betabar$ modulo~$\varpi$, 
 so that~$\m$ is a proper ideal. Then there exists a canonical surjection
of semi-local rings
$$\Rtd_n \rightarrow \wT_{n,{\m}}$$
sending~$\alpha \in \Rtd_n$ to~$U_p$.
\end{lemma}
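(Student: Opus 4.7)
My plan is to interpolate the Galois determinants attached to classical eigenforms into an ordinary determinant of weight~$n$ with eigenvalue~$U_p$ on~$\wT_{n,\m}$, and then apply the universal property of~$\Rtd_n$. Since $M_n(\Gamma_0(p) \cap \Gamma_1(N), \OL)$ is a finite free $\OL$-module (using~$N \geq 5$), the Hecke algebra $\wT_n$ is finite and $\OL$-torsion-free, and $\wT_{n,\m}$ embeds into $\wT_{n,\m} \otimes_\OL K \cong \prod_f K_f$, a product indexed by Galois-conjugacy classes of classical eigenforms~$f$ of weight~$n$ and level $\Gamma_1(N) \cap \Gamma_0(p)$ whose mod-$\varpi$ Hecke eigensystem is cut out by~$\m$. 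To each~$f$, Deligne's construction attaches a Galois representation~$\rho_f$ and thus a determinant $P_f$ unramified outside~$Np$, with $T_f(\Frob_\ell) = a_\ell(f)$ and $D_f(\Frob_\ell) = \ell^{n-1}\chi_f(\ell)$. The $U_p$-eigenvalue $\alpha_f$ reduces mod~$\varpi$ to either~$\alphabar$ or~$\betabar$, both of which are units since $\alphabar\betabar = \chi(p)$ and the weight-one Nebentypus~$\chi$ has conductor coprime to~$p$; hence each~$f$ is ordinary and $(P_f, \alpha_f)$ is an ordinary determinant of weight~$n$ by Lemma~\ref{lemma:classical}.

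Next, I descend $P := \prod_f P_f$ to $\wT_{n,\m}$. Its values $T(\Frob_\ell) = T_\ell$ and $D(\Frob_\ell) = \langle\ell\rangle\ell^{n-1}$ for $(\ell, Np)=1$ lie in the $\OL$-subalgebra of $\wT_{n,\m}$ generated by these operators, which is a closed, finitely generated $\OL$-submodule. Continuity of the polynomial law and Chebotarev density then force $T(g), D(g) \in \wT_{n,\m}$ for every $g \in G_{\Q, Np}$, so $P$ descends to a determinant on~$\wT_{n,\m}$. Each condition of Definition~\ref{ord} for $(P, U_p)$ is a polynomial identity in~$\wT_{n,\m}$; I check it after base change to $\prod_f K_f$, where on the $f$-factor $U_p$ maps to $\alpha_f$ and the identity reduces to the ordinarity of $(P_f, \alpha_f)$. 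The injection $\wT_{n,\m} \hookrightarrow \prod_f K_f$ then pulls each identity back to~$\wT_{n,\m}$.

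Decomposing $(P, U_p)$ over the semi-local factors of $\wT_{n,\m}$ according to whether the residue of~$U_p$ equals~$\alphabar$ or~$\betabar$, the universal property of the components $\Rtw_n^{\dagger,\alphabar}$ and $\Rtw_n^{\dagger,\betabar}$ yields the desired canonical map $\Rtd_n \to \wT_{n,\m}$ with $\alpha \mapsto U_p$. Surjectivity is immediate: the image contains $T_\ell$, $\langle\ell\rangle$ (since~$\ell \in \OL^{\times}$), and $U_p$, which together generate $\wT_{n,\m}$ over~$\OL$. The main obstacle is verifying the third condition of Definition~\ref{ord} on $\wT_{n,\m}$, namely the identity $T(g(h - \psi(h))(\phi - U_p)) = 0$ for $g \in G_{\Q}$, $h \in I_p$: this relies both on the matrix calculation of Lemma~\ref{lemma:classical} at each classical~$f$ and on the $\OL$-flatness of $\wT_{n,\m}$ to transfer the vanishing from the generic fibre back to the integral Hecke algebra.
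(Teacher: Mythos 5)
Your proposal takes essentially the paper's route, just packaged via the universal property of $\Rtd_n$ rather than by showing that a map $j_n: \Rtd_n \to \bigoplus_f L$ factors through $\wT_{n,\m}$ --- these are two ways of saying the same thing. The key common ingredient in both versions is the injection of $\wT_{n,\m}$ into a product of fields indexed by classical eigenforms, followed by Lemma~\ref{lemma:classical} on each factor.

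However, there is a genuine gap at exactly that key ingredient. You assert that $\wT_{n,\m} \otimes_{\OL} K \cong \prod_f K_f$, i.e.\ that the localized Hecke algebra becomes a product of fields after inverting $\varpi$, without any justification. This is equivalent to reducedness of $\wT_{n,\m} \otimes K$, and it is the main point the paper has to argue. The danger comes from the $p$-old part: for a newform $h$ of level prime to $p$, the operator $U_p$ acts on the $2$-dimensional oldform space $V(h)$ with characteristic polynomial $X^2 - a_p X + p^{n-1}\chi(p)$, and if this has a repeated root then $U_p$ is not diagonalizable, $\wT_n \otimes K$ acquires nilpotents, and the map to $\prod_f K_f$ ceases to be injective. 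The paper resolves this by observing that a repeated root forces $a_p^2 = 4p^{n-1}\chi(p)$, so the (repeated) $U_p$-eigenvalue has positive $\varpi$-adic valuation since $n \ge 2$; such $h$ therefore contribute nothing after localizing at $\m$, where $U_p$ acts invertibly because $\m$ contains a lift of $(U_p-\alphabar)(U_p-\betabar)$ with $\alphabar\betabar = \chi(p) \in k^\times$. Your remark that $\alphabar$ and $\betabar$ are units is the right observation, but it is deployed to justify ordinarity of the individual $f$, not to establish the reducedness/semisimplicity that your very first isomorphism already presupposes. Without that argument, the injection $\wT_{n,\m} \hookrightarrow \prod_f K_f$ --- on which both the descent of the determinant and the verification of the ordinarity identities rest --- is unproven.

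One further minor remark: you flag the third condition of Definition~\ref{ord} as ``the main obstacle,'' but once the injection into $\prod_f K_f$ is in hand, that condition follows immediately from Lemma~\ref{lemma:classical} componentwise. The actual main obstacle is the semisimplicity discussed above.
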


\begin{remark} \label{semilocal} \emph{
If~$\T_n \subset \wT_n$ denotes the subring generated by the  all the Hecke operators except~$U_p$, then~$\m \cap \T_n$ is maximal. However,~$\m$ itself need not be maximal.
Throughout the rest of the paper, we let~$\wT_{n,{\m}}$ denote the
completion~$\wT_{n,{\m}}:= \projlim{\wT_{n}/\m^r}$ --- it need not be a local ring.
The Hecke algebra~$\wT_{n,{\m}}$ is non-local precisely when~$\alphabar \ne \betabar$ and when~$\Pbar$ is
associated to an ordinary mod~$\varpi$ eigenform with~$U_p$-eigenvalue congruent to~$\alphabar \mod \varpi$
and is \emph{also} associated to an eigenform with~$U_p$-eigenvalue congruent to~$\betabar \mod \varpi$. In that case, the ideals~$\m_{\alphabar}$ and~$\m_{\betabar}$ obtained
by adjoining any lift of~$U_p - \alphabar$ or~$U_p - \betabar$  respectively from~$\wT_n/\varpi$ to~$\m$ are both maximal, 
and there is an isomorphism~{$\wT_{n,\m} \cong \wT_{n,\m_{\alphabar}} \oplus  \wT_{n,\m_{\betabar}}$}. Working with semi-local rings
allows us to treat the cases~$\alphabar = \betabar$ and~$\alphabar \ne \betabar$ simultaneously.
If~$M$ is a module for~$\wT_n$, then, when~$\m$ is not maximal, there is also a corresponding
 identification~$M_{\m} := \projlim M/\m^r =  M_{\m_{\alphabar}} \oplus  M_{\m_{\betabar}}$.
}
\end{remark}

\begin{proof}[Proof of Lemma~\ref{lemma:higher}]  Consider an embedding~$K \rightarrow L$, where~$L$ is a field
which contains the eigenvalues of all elements of~$\wT_n$.
The Hecke algebra~$\wT_n$ 
 acts faithfully on ~{$M_n(\Gamma_0(p) \cap \Gamma_1(N),L)$}.
Recall that~$\T_n \subset \wT_n$ denotes the subring generated by Hecke operators away from~$Np$
(i.e.~without~$U_p$).
For each newform~$h$ which contributes to~$M_n(\Gamma_0(p) \cap \Gamma_1(N),L)$,
there is a corresponding vector space~$V(h) \subset M_n(\Gamma_0(p) \cap \Gamma_1(N),L)$ generated by~$h$ together with the oldforms associated to~$h$. (The space~$V(h)$
can also be identified with the invariants~$\pi^{\Gamma_1(N) \cap \Gamma_0(p)}$,
where~$\pi$ is the smooth admissible~$\GL_2(\mathbf{A}^{(\infty)})$-representation over~$L$ generated by~$h$.)
 There is a~$\T_n$-equivariant
isomorphism
$$M_n(\Gamma_0(p) \cap \Gamma_1(N),L) \simeq \bigoplus_{g} V(h),$$
where~$\T_n$ acts on~$V(h)$ through scalars corresponding to the
homomorphism~$\eta_h: \T_n \rightarrow L$ sending~$T_{\ell}$ to~$a_{\ell}(h)$
and~$\langle \ell \rangle$ to~$\ell^{n-1} \chi(\ell)$ where~$\chi$ is the Nebentypus  character of~$h$.
Let us now consider the action of the operator~$U_p$. For each map~$\eta_h: \T_n \rightarrow L$
(which corresponds to a fixed Galois representation~$\rho_h$) one of the following
two things happens:
\begin{enumerate}
\item The newform~$h$ has level~$\Gamma_0(p)$ at~$p$, in which case~$U_p$ acts on~$V(h)$ via a scalar.
\item The newform~$h$ has level~$\Gamma_0(1)$ at~$p$, in which case~$U_p$ acts on~$V(h)$
and satisfies the identity~$U^2_p - a_p U_p + p^{n-1} \chi(p) = 0$.
\end{enumerate}
In particular, the algebra~$\wT_n$ will always acts semi-simply in the first
case and act semi-simply in the second case as long
as the corresponding polynomial~$X^2 - a_p X + p^{n-1} \chi(p)$ has distinct roots.
This is known in general only under the assumption of the Tate conjecture
 (cf.~\cite{Coleman}), but it can certainly only fail to happen
when~$a^2_p = 4 p^{n-1} \chi(p)$, which would force the (multiple) eigenvalue of~$U_p$
to have positive valuation (since~$n \ge 2$). In particular, such forms do not contribute to
$M_n(\Gamma_0(p) \cap \Gamma_1(N),\OL)_{\m} \otimes_{\OL} L$, because
(since~$\m$ contains the preimage of~$(U_p - \alphabar)(U_p -\betabar)$
for non-zero~$\alphabar$ and~$\betabar$) the element~$U_p$
acts invertibly on this space. (Recall, following Remark~\ref{semilocal},
that when~$\m$ is contained in two primes, $ M_n(\Gamma_0(p) \cap \Gamma_1(N),\OL)_{\m}$ is simply
the direct sum of~$M_n(\Gamma_0(p) \cap \Gamma_1(N),\OL)_{\m_{\alphabar}}$ and~$M_n(\Gamma_0(p) \cap \Gamma_1(N),\OL)_{\m_{\betabar}}$.)
It follows
that there is an injection
$$i_n: \wT_{n,\m} \hookrightarrow \bigoplus_{f} L,$$
where the sum ranges 
over all~$\wT_n$-eigenforms ~$f \in M_n(\Gamma_0(p) \cap \Gamma_1(N),L)$ such that~$\Pbar_f = \Pbar$ and the~$U_p$-eigenvalue is congruent either to~$\alphabar$ or~$\betabar$.
We identify~$\wT_{n,\m}$ with its image under~$i_n.$ For each of the forms~$f$ above, denote the~$U_p$-eigenvalue by~$\alpha(f)$. By Lemma~\ref{lemma:classical}, the determinants~$P_f$  are ordinary with
eigenvalues~$\alpha(f)$, weight~$n$, and unramified outside~$Np$. Hence, for each form~$f$ there is a homomorphism
$$i_f:\Rtd_n \rightarrow L$$ such that~$i_f\circ P^{\dag,\univ} = P_f$ and which maps~$\alpha$ to~$\alpha(f).$  Taking the direct sum of the maps~$i_f,$ we obtain a homomorphism~$$j_n: \Rtd_n \rightarrow \bigoplus_{f} L$$ under which~$\alpha$ maps to~$U_p$,
$T(\Frob_{\ell})$ maps to~$T_{\ell},$ and~$D(\Frob_{\ell})$ maps to~$\langle \ell \rangle.$ We conclude that~$j_n$ factors through a surjective homomorphism ~$$\Rtd_n \rightarrow \wT_{n,\m}$$ under which ~$\alpha$ maps to~$U_p.$
\end{proof}

We are now ready to prove the main theorem.

\begin{proof}[Proof of Theorem~\ref{theorem:main}] Recall that~$\T = \T_1$ is the $\mathcal{O}$-subalgebra of ~$\mathrm{End}
_{\mathcal{O}}H^0(X_1(N),\omega \otimes K/\mathcal{O}
)$ generated by~$T_{\ell}$ and~$\langle \ell \rangle$ for~$(\ell,N) = 1$. This ring contains~$T_p$, but the element~$T_p$ in weight one is also generated by the other Hecke operators
(see, for example, Lemma~3.1 of~\cite{calegari2015non}). For each positive integer $m,$ let ~$\T(m)$ denote the image $\T$ in ~$\End_{\mathcal{O}} H^0(X_1(N),\omega/\varpi^m).$ 
The ring ~$\T \cong \varprojlim \T(m).$ Therefore, to prove
 Theorem~\ref{theorem:main}, it suffices to construct for each $m > 0$ a degree~$d = 2$ 
determinant $$\DD_m: \T(m)[G_{\Q}] \rightarrow \T(m), \qquad P(\DD_m,\sigma) = X^2 - T_m(\sigma) X + D_m(\sigma),$$
which is unramified outside ~$N\infty,$ and such that for all primes ~$\ell \nmid N$ (including $\ell =p)$ the characteristic polynomial of $\Frob_{\ell}$ satisfies  
 ~$$T_m(\Frob_{\ell}) = T_{\ell} \ \text{and} \ D_m(\Frob_{\ell}) = \langle \ell \rangle.$$ 
 In the remainder of the proof, we will assume that $m>0$ is fixed, and will denote by an abuse of notation ~$\T(m)$ by~$\T.$ 

There is a decomposition~$\T = \bigoplus \T_{\m}$
over the maximal ideals~$\m$ of~$\T.$ Hence, it suffices to construct the desired determinant after completing at a maximal ideal~$\m$ of~$\T$. Let~$\Pbar$ denote our fixed modular residual determinant, which we have assumed is supported in weight one, and let~$\m$ denote the maximal
ideal which is the kernel of the corresponding map~$\T \rightarrow k$.
Let~$\wT_{n}$ denote the Hecke algebra of Lemma~\ref{lemma:higher} in weight~$n := 1 + p^{m-1}(p-1)$ which contains~$U_p$
(and has coefficients in~$\OL$). By abuse of notation,
we also let~$\m$ denote the ideal of~$\wT_{n}$ defined in Lemma ~\ref{lemma:higher}.  By Lemma~\ref{implicit}, this ideal is proper.

By Lemma~3.16 of~\cite{CG},   there is a surjective map
\begin{equation}\label{doubling} \Rtd_n \twoheadrightarrow \wT_{n,\m} \twoheadrightarrow \Sw:= \T_{\m}[U_p]/(U_p^2 - T_p U_p + \langle p \rangle) \end{equation} (which sends~$T_{\ell}$ and~$\langle \ell \rangle$ to~$T_{\ell}$ and~$\langle \ell \rangle$ respectively, and sends~$U_p$ to~$U_p$, where~$U_p$ in $\Sw$ is viewed
as a formal variable satisfying the given quadratic relation). Although the running assumption
in~\S3 of~\cite{CG} is that~$p > 2$, the proof of~\cite[Lemma~3.16]{CG} applies
(as written with no changes necessary)
 with~$p = 2$. 
The image~$S$ of~$\Rd_n \subset \Rtd_n$ is generated by the values of~$T$ and~$D$ on Frobenius elements, which land inside the ring~$\T_{\m}$ (in fact, they
generate the ring~$\T_{\m}$). But~$\Sw$ is free of rank two over~$\T_{\m}$, and thus~$\Sw/S$ has no annihilator.
Consequently, the corresponding determinant in~$\T_{\m}$ is unramified by Proposition~\ref{prop:key}.
To show that~$T(\Frob_p) = T_p$ and~$D(\Frob_p) = \langle p \rangle$,  it suffices to show that~$T(\phi) = T_p$ and~$D(\phi) = \langle p \rangle$. 
The image of~$\alpha$ in ~$\T_{\m}[U_p]/(U_p^2 - T_p U_p + \langle p \rangle)$ was~$U_p$, which satisfies the equation
$X^2 - T_p X + \langle p \rangle = 0$. Yet~$\alpha$ also satisfies the equation~$X^2 - T(\phi) X + D(\phi) = 0$.
Since this algebra is free of rank two over~$\T_{\m}$, these quadratics must be the same, and hence~$T(\phi) = T_p$ and~$D(\phi) = \langle p \rangle$.
\end{proof}

\begin{remark} \label{rem:original}
\emph{The proof above relies on~\cite[Lemma~3.16]{CG}.  
We also note, however, that the content of this lemma is simply
an alternate form of doubling
which is a already implicit in the work of Wiese~\cite{Wiese}. }
\end{remark}

\begin{remark} \emph{One should also be able to apply the methods
of this paper in the case~$\qq \ne p$ when~$\qq$ exactly divides~$N$,
where now one wants to capture in this context the notion of a determinant
  ``admitting an unramified quotient line''  when restricted to the inertia group~$I_{\qq}$ at~$\qq$ (cf.~\S1.8 of~\cite{1804.06400}).}
\end{remark}

\bibliographystyle{amsalpha}
\bibliography{snowden}

 \end{document}